\documentclass[10pt]{amsart}

\usepackage{amsmath}
\usepackage{amssymb}
\usepackage[shortalphabetic]{amsrefs}
\usepackage{stmaryrd}
\usepackage{verbatim}
\usepackage{enumerate}
\usepackage[all]{xy}
\usepackage{tikz}

\usetikzlibrary{shapes.geometric,arrows}
\tikzstyle{startstop} = [rectangle, rounded corners, minimum width=3cm, minimum height=1cm,text centered, draw=black, fill=white]
\tikzstyle{io} = [trapezium, trapezium left angle=70, trapezium right angle=110, minimum width=3cm, minimum height=1cm, text centered, draw=black, fill=white]
\tikzstyle{process} = [rectangle, minimum width=2cm, minimum height=1cm, text centered, text width=3.5cm, draw=black, fill=white]
\tikzstyle{decision} = [diamond, minimum width=2cm, minimum height=2cm, text centered,  text width=2cm, draw=black, fill=white]
\tikzstyle{arrow} = [thick,->,>=stealth]



\newtheorem{thm}{Theorem}
\newtheorem*{thm*}{Theorem}

\newtheorem{prop}[equation]{Proposition}

\newtheorem{coro}[equation]{Corollary}

\theoremstyle{remark}
\newtheorem*{remark*}{Remark}

\theoremstyle{definition}









\DeclareMathOperator{\End}{End}

\DeclareMathOperator{\Adj}{Adj }
\DeclareMathOperator{\Der}{Der }

\DeclareMathOperator{\GL}{GL}




\DeclareMathOperator{\Aut}{Aut}

\DeclareMathOperator{\Cent}{Cen}




\newcommand{\bmto}{\rightarrowtail}

\title{New Lie products for groups and their automorphisms}

\author{James B. Wilson}
\address{
	Department of Mathematics\\
	Colorado State University\\
	Fort Collins, CO 80523\\
}
\email{James.Wilson@ColoState.Edu}
\date{\today}
\keywords{central series, graded Lie rings, filters}

\begin{document}

\maketitle
\begin{abstract}
We generalize the common notion of descending and ascending central series.
The descending approach determines a naturally graded Lie ring and
the ascending version determines a graded module for this ring.  We also link derivations of these rings to the automorphisms
of a group.  This process uncovers new structure in 4/5 of the approximately 11.8 million groups of size at most 1000 and beyond that point pertains
to at least a positive logarithmic proportion of all finite groups. 
\end{abstract}

\section{Introduction}

At the 1958 International Congress of Mathematicians Graham Higman presented 
``Lie ring methods in the theory of finite nilpotent groups'' \cite{Higman:Lie}. He promoted a connection between 
Group Theory and Lie Theory relying on the general observations that group commutators $[x,y]=x^{-1} x^y=x^{-1}y^{-1}xy$ obey 
relations that mimic the distributive,
alternating, and the Jacobi identities, 
\begin{align*}
	[xy,z]  = [x,z]^y [x,z],\qquad
	[x,x]  = 1,\qquad
	[[x,y^{-1}],z]^{y}[[y,z^{-1}],x]^z[[z,x^{-1}],y]^x  = 1.
\end{align*}
Already Lazard and Mal'cev had exploited this and even more subtle connections; see \cite{Khukhro}*{Chapter 3} for an excellent modern survey. 
Higman's point was to popularize 
connections to Lie rings ``...less precise,  but much more generally applicable'' \cite{Higman:Lie}*{p. 307}.  
The spirit of the present article is similar.

We generalize the descending and ascending central series so that commutation products exist between the
factors and make Lie rings and modules, but we relax
the condition that they be graded by positive integers (Theorems~\ref{thm:ascending-Lie-basic} \& \ref{thm:layers}).  We further lift this correspondence to automorphism groups (Theorems~\ref{thm:auto-basic} \& \ref{thm:auto}).
By {\em commutation products} we mean those products between factors
with the following operations ($\bar{x}$ denotes a congruence class):
\begin{align*}
	\bar{x}\circ\bar{y} & = \overline{[x,y]} & \bar{x}+\bar{y}& = \overline{xy} & -\bar{x} & = \overline{(x^{-1})}
		& 0 & = \bar{1}.
\end{align*}
Commutation products that are graded by cyclic semigroups we call {\em classical}.  

We know of five easily computed non-classical commutation products, three introduced in \cite{Wilson:alpha} and two here.  
 On a log-scale a positive proportion of all finite groups admit a non-classical
commutation product \cite{Wilson:alpha}.  Thanks to the recent algorithms of Maglione (see \cite{Maglione:filter}), we conducted a test to 
find the actual proportion amongst small groups \cite{millennium}.  
We found at least 9,523,263 of the 11,758,814 
groups (81\%) of order at most 1000 have new non-classical commutation products;
Figure~\ref{fig:data} plots this proportion. It remains a mystery why a simple
relaxation of subscripts discovers so much structure but we offer some clues
in Section~\ref{sec:closing}.

\begin{figure}[!htbp]
\input{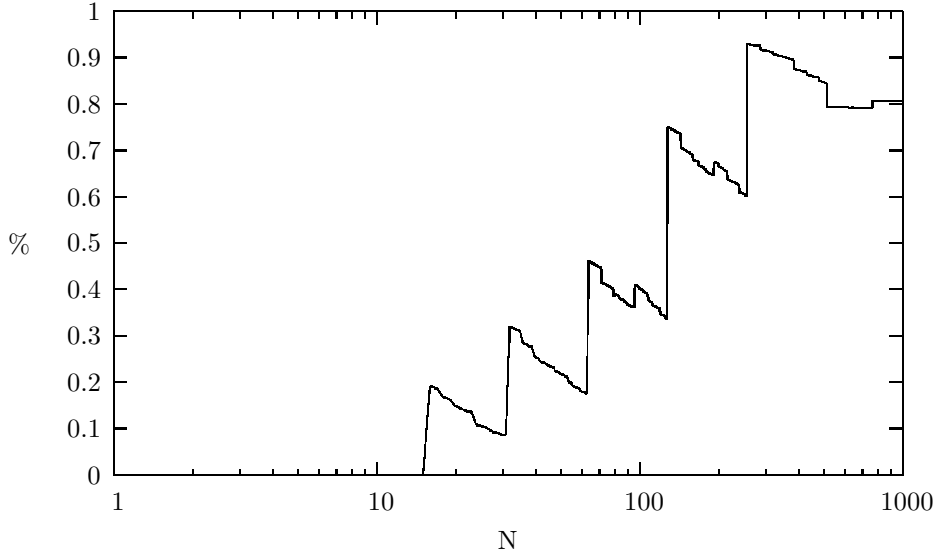}
\caption{The proportion of groups of order $N\leq 1000$ with newly found characteristic non-classical commutation Lie products.}\label{fig:data}
\end{figure}

 To begin piecing together the origins of these new characteristic structures we rethink the meaning of central ``series'' keeping the idea of
ascending and descending and associated commutation products.
\smallskip

Usually when thinking of commutation products we start 
with the {\em lower central series} where
$\gamma_1=G$ and $\gamma_{i+1}=[G,\gamma_i]$,
and the {\em upper central series} where $\zeta^0=1$ 
and $\zeta^{i+1}$ is the centralizer of $G/\zeta^i$.  Commutation makes 
 a natural graded Lie ring $L_*(\gamma)=\bigoplus_{i> 0}\gamma_i/\gamma_{i-1}$; cf. \cite{Khukhro}*{Chapter 3}.
Perhaps known but certainly under used is the following Lie product for the upper central series.

\begin{thm}\label{thm:ascending-Lie-basic}
Given any group $G$, a commutative ring $R$, and an $R$-module $Q$,
\begin{align*}
	L^*(\zeta;Q) & = \bigoplus_{i>0}  \hom_{\mathbb{Z}}(\zeta^{i}/\zeta^{i-1},Q)
\end{align*}
is a natural graded Lie module of the natural Lie ring $L_*(\gamma)\otimes_{\mathbb{Z}} R$.
\end{thm}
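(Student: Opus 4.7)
The plan is to define a bilinear pairing $(\gamma_{i}/\gamma_{i+1})\otimes_{\mathbb{Z}}\hom_{\mathbb{Z}}(\zeta^{j}/\zeta^{j-1},Q)\to\hom_{\mathbb{Z}}(\zeta^{i+j}/\zeta^{i+j-1},Q)$ by transporting the group commutator through $\hom(-,Q)$, and then to verify on it the $\mathbb{Z}$-bilinearity and Jacobi axioms of a Lie module. Everything rests on one containment: for all $i\geq 1$ and $j\geq 0$, $[\gamma_{i},\zeta^{j+i}]\subseteq\zeta^{j}$. The base $i=1$ is the definition $\zeta^{j+1}/\zeta^{j}=Z(G/\zeta^{j})$; for the inductive step, write $\gamma_{i+1}=[G,\gamma_{i}]$ and apply the three-subgroup lemma to $(G,\gamma_{i},\zeta^{j+i+1})$: both $[G,[\gamma_{i},\zeta^{j+i+1}]]$ and $[\gamma_{i},[G,\zeta^{j+i+1}]]$ lie in $\zeta^{j}$ by the inductive hypothesis, hence so does $[\gamma_{i+1},\zeta^{j+i+1}]$.

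Given the containment, set $(\bar{x}\cdot\phi)(\bar{z}):=\phi(\overline{[x,z]})$ for $x\in\gamma_{i}$ and $z\in\zeta^{i+j}$. Well-definedness on cosets: changing $x$ within $\gamma_{i+1}$ moves $[x,z]$ into $[\gamma_{i+1},\zeta^{i+j}]\subseteq\zeta^{j-1}$, which $\phi$ annihilates; changing $z$ within $\zeta^{i+j-1}$ puts $[x,z]$ into $[\gamma_{i},\zeta^{i+j-1}]\subseteq\zeta^{j-1}$, again annihilated. Biadditivity follows from the identities $[x,zw]=[x,w][x,z]^{w}$ and $[xy,z]=[x,z]^{y}[y,z]$ together with centrality of $\zeta^{j}/\zeta^{j-1}$ in $G/\zeta^{j-1}$, which collapses the conjugating exponents. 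The action is then $\mathbb{Z}$-bilinear and extends $R$-bilinearly after tensoring the left factor with $R$.

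The Lie-module identity $[\bar{x},\bar{y}]\cdot\phi=\bar{x}\cdot(\bar{y}\cdot\phi)-\bar{y}\cdot(\bar{x}\cdot\phi)$ unpacks to the congruence $\overline{[[x,y],z]}\equiv\overline{[y,[x,z]]}-\overline{[x,[y,z]]}\pmod{\zeta^{j-1}}$ in $\zeta^{j}/\zeta^{j-1}$, which I extract from the Hall--Witt identity $[[x,y^{-1}],z]^{y}[[y,z^{-1}],x]^{z}[[z,x^{-1}],y]^{x}=1$ after reducing modulo $\zeta^{j-1}$: the conjugating exponents collapse because each triple commutator lies in $\zeta^{j}$, and replacing $y^{-1},z^{-1},x^{-1}$ by $y,z,x$ alters each triple commutator only by elements in subgroups of the form $[\gamma_{\bullet},[\gamma_{\bullet},\zeta^{\bullet}]]$ which, by iterated use of the key containment, land in $\zeta^{j-1}$. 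Naturality in $G$, $R$, and $Q$ is then automatic from the functoriality of $\gamma_{\bullet}$, $\zeta^{\bullet}$, and $\hom$. The step I expect to be most delicate is the Hall--Witt reduction: one must audit that every conjugation ``cost'' incurred when manipulating the identity indeed lands in $\zeta^{j-1}$, which requires careful tracking of indices against the key containment.
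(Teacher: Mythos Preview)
Your approach is essentially the paper's: define a commutator pairing $L_i(\gamma)\times L^{i+j}(\zeta)\to L^j(\zeta)$, dualize through $\hom(-,Q)$ to get a graded action, and verify the Lie-module identity via Hall--Witt. The paper carries this out in the generality of an arbitrary filter $\phi$ sifting a layering $\pi$ (its Theorem on layers) and then specializes to $\phi=\gamma$, $\pi=\zeta$; you work directly with $\gamma$ and $\zeta$ and supply the containment $[\gamma_i,\zeta^{i+j}]\leq\zeta^j$ by induction and the three-subgroup lemma, which the paper simply cites as known.

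There is, however, a sign error that will surface exactly where you anticipate trouble. With your definition $(\bar{x}\cdot\phi)(\bar{z})=\phi(\overline{[x,z]})$, the Lie-module identity $[\bar{x},\bar{y}]\cdot\phi=\bar{x}\cdot(\bar{y}\cdot\phi)-\bar{y}\cdot(\bar{x}\cdot\phi)$ does unpack to $\overline{[[x,y],z]}\equiv\overline{[y,[x,z]]}-\overline{[x,[y,z]]}$, but Hall--Witt actually yields the opposite sign: in the abelian quotient one gets $\overline{[[x,y],z]}\equiv\overline{[x,[y,z]]}-\overline{[y,[x,z]]}$. So your action as written satisfies $[\bar{x},\bar{y}]\cdot\phi=\bar{y}\cdot(\bar{x}\cdot\phi)-\bar{x}\cdot(\bar{y}\cdot\phi)$, i.e.\ it is a module for the opposite Lie ring. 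The paper handles this by inserting a minus sign in the shuffled product, defining $(\bar{x}\otimes r)\cdot f:\bar{y}\mapsto -(\bar{x}\circ\bar{y})fr$; with that correction your argument goes through and matches the paper's.
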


For torsion free groups the use of $R=Q=\mathbb{Q}$ is a natural choice.
For finite groups, if we use $R=\mathbb{Z}$ and $Q=\mathbb{Q}/\mathbb{Z}$
then $\hom(\zeta^i/\zeta^{i-1},\mathbb{Q}/\mathbb{Z})\cong \zeta^i/\zeta^{i-1}$,
and for $p$-groups we might prefer $Q=\mathbb{Z}_{p^e}$ or $\mathbb{Z}_{p^{\infty}}$.
So we prove:
\begin{coro}\label{coro:1}
For finite groups, the upper central series factors  $L^i(\zeta)=\zeta^{i}/\zeta^{i-1}$ form  
a  graded Lie module $L^*(\zeta)  = \bigoplus_{i>0}  L^i(\zeta)$ for the graded Lie ring $L_*(\gamma)=\bigoplus_{i>0} L_i(\gamma)$ 
of the lower central series factors $L_i(\gamma)=\gamma_i/\gamma_{i+1}$.
\end{coro}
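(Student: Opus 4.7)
The plan is to instantiate \thmref{thm:ascending-Lie-basic} with $R=\mathbb{Z}$ and $Q=\mathbb{Q}/\mathbb{Z}$, then transport the resulting Lie module structure to $L^*(\zeta)$ via Pontryagin duality. Applying the theorem verbatim with this choice of ring and module, one obtains that
\[
	L^*(\zeta;\mathbb{Q}/\mathbb{Z}) = \bigoplus_{i>0} \hom_{\mathbb{Z}}(\zeta^{i}/\zeta^{i-1},\mathbb{Q}/\mathbb{Z})
\]
is a graded Lie module for $L_*(\gamma)\otimes_{\mathbb{Z}}\mathbb{Z}\cong L_*(\gamma)$.

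Next, because $G$ is finite, the upper central series stabilizes after finitely many steps and each factor $\zeta^i/\zeta^{i-1}$ is a finite abelian group. Pontryagin duality for finite abelian groups supplies, for each $i>0$, an isomorphism of abelian groups
\[
	\hom_{\mathbb{Z}}(\zeta^{i}/\zeta^{i-1},\mathbb{Q}/\mathbb{Z})\cong \zeta^{i}/\zeta^{i-1}=L^i(\zeta).
\]
Assembling these degree by degree yields an isomorphism of graded abelian groups $L^*(\zeta;\mathbb{Q}/\mathbb{Z})\cong L^*(\zeta)$. Transporting the Lie module action across this isomorphism defines the desired graded $L_*(\gamma)$-module structure on $L^*(\zeta)$.

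The work is therefore mostly bookkeeping rather than genuinely hard: one invokes the already-established theorem and then uses classical Pontryagin duality on each graded piece. The one subtlety worth flagging is that the dualizing isomorphism $A\cong \hom_{\mathbb{Z}}(A,\mathbb{Q}/\mathbb{Z})$ for a finite abelian group $A$ is not canonical, so the action of $L_*(\gamma)$ on $L^*(\zeta)$ is defined only up to this choice; but the corollary asserts only the \emph{existence} of a graded Lie module structure, not its naturality, so this freedom is harmless. If one instead wanted a canonical statement, one would prefer to keep $L^*(\zeta;\mathbb{Q}/\mathbb{Z})$ itself as the Lie module, as in \thmref{thm:ascending-Lie-basic}.
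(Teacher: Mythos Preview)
Your proof is correct and follows exactly the approach the paper takes: instantiate \thmref{thm:ascending-Lie-basic} with $R=\mathbb{Z}$ and $Q=\mathbb{Q}/\mathbb{Z}$, then use Pontryagin duality $\hom_{\mathbb{Z}}(\zeta^i/\zeta^{i-1},\mathbb{Q}/\mathbb{Z})\cong \zeta^i/\zeta^{i-1}$ on each finite factor. The paper makes precisely the same remark you do about the non-canonicity of these isomorphisms, noting that it is more natural to retain the coefficients if one wants a canonical statement.
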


We will make both these results far more general below.  Our further connection between central factors and Lie algebras relates to the structure of automorphism groups.
In general we expect no relationship to exist between the central series of a group and its automorphism group 
(consider $G=\mathbb{Z}_p^n$ and $\Aut(G)\cong \GL(d,p)$). 
However,  we argue the more appropriate Lie algebra to explore in connection to $\Aut(G)$  is the graded
derivation ring $\Der(L(\gamma))$.  A hint of this might be recognized from the family of
automorphisms $\phi$ on $G$ that are the identity on $G/\zeta^1$.  Such ``central'' automorphisms induce
a linear map $G/\gamma_2\to \zeta^1$ by $\gamma_2 x\mapsto x^{-1} x\phi$.  The following theorem can be seen
as generalizing central automorphisms but retaining the vital Lie structure.  Stated just for the traditional 
central factors we prove:

\begin{thm}\label{thm:auto-basic}
Let $G$ be a group and $A\to \Aut(G)$ a group homomorphism.  Define
\begin{align*}
	A_i & = \{ a\in A : \forall j, \forall g\in \gamma_j(G),~ g^{-1} g^a\in \gamma_{i+j}(G)\} & i>0.
\end{align*}
Then for all $i,j>0$, $[A_i,A_j]\leq A_{i+j}$ and $L_*(A)=\bigoplus_{i>0} A_i/A_{i+1}$ is a graded Lie ring
with a canonical graded Lie homomorphism into the graded derivation ring $\Der(L_*(\gamma))$.  Also,
$A/A_1$ is the faithful representation of $A$ acting on $L_*(\gamma)$ by conjugation.
\end{thm}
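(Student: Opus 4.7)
The plan is to realize both the filtration on $A$ and the derivation action inside a single Lazard filter on the semidirect product $H = G \rtimes A$, where $A$ acts via the given homomorphism $A \to \Aut(G)$. Set $H_i = \gamma_i(G)\cdot A_i$ for $i\geq 1$. Once we verify that each $H_i$ is a subgroup and $[H_i, H_j] \leq H_{i+j}$, the standard Lazard construction on a central filter yields a graded Lie ring $L_*(H) = \bigoplus_{i>0} H_i/H_{i+1}$, and every assertion of the theorem will fall out of it.

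The first step is routine bookkeeping on $A_i$: show $A_i\leq A$ is a subgroup (using $[g,ab] = [g,b][g,a]^b$ and $[g, a^{-1}] = ([g,a]^{a^{-1}})^{-1}$, together with the fact that $\gamma_k$ is characteristic in $G$ so $A$ preserves it); show $A_i \trianglelefteq A$ (using $[g, a^b] = [g^{b^{-1}},a]^b$); and observe $A_1\supseteq A_2 \supseteq \cdots$ is immediate from $\gamma_{i+k+1}\leq \gamma_{i+k}$. Combined with the definition of $A_i$, these give the three easy bracket bounds
\[
 [\gamma_i,\gamma_j]\leq \gamma_{i+j},\qquad [A_i,\gamma_j]\leq \gamma_{i+j},\qquad [\gamma_i,A_j]\leq \gamma_{i+j},
\]
which account for three of the four pieces of $[H_i,H_j]\leq H_{i+j}$.

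The main obstacle is the fourth piece: $[A_i,A_j]\leq A_{i+j}$. Fix $a\in A_i$, $b\in A_j$, and an arbitrary $g\in \gamma_k$; we must show $[g,[a,b]]\in \gamma_{i+j+k}$. Apply the Hall--Witt identity displayed in the introduction with $(x,y,z) = (a,b^{-1},g)$ to obtain
\[
 [[a,b],g]^{b^{-1}} \cdot [[b^{-1},g^{-1}],a]^{g} \cdot [[g,a^{-1}],b^{-1}]^{a} = 1.
\]
Using the three easy bracket bounds above (first control $[b^{-1},g^{-1}]\in\gamma_{j+k}$ and $[g,a^{-1}]\in\gamma_{i+k}$, then bracket against $a$ and $b^{-1}$ respectively), the second and third factors lie in $\gamma_{i+j+k}$. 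Hence so does the first, and since $\gamma_{i+j+k}$ is characteristic we strip the outer conjugation by $b^{-1}$ to conclude $[g,[a,b]]\in \gamma_{i+j+k}$, as required.

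With $[H_i,H_j]\leq H_{i+j}$ in hand, Lazard's construction gives the graded Lie ring $L_*(H)$. Since $G\cap A = 1$ in the semidirect product, each layer splits canonically as $H_i/H_{i+1}\cong L_i(\gamma)\oplus A_i/A_{i+1}$; the first summand is a graded Lie ideal and the second a graded Lie subring, and this exhibits $L_*(A)$ as a graded Lie ring. The induced bracket of $L_*(A)$ on $L_*(\gamma)$ is by graded derivations because the Jacobi identity in $L_*(H)$ applied to one element of $L_*(A)$ and two of $L_*(\gamma)$ is exactly the Leibniz rule; this furnishes the canonical graded Lie homomorphism $L_*(A)\to \Der(L_*(\gamma))$. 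Finally, unwinding the definition of $A_1$, it is precisely the kernel of $A\to \Aut(L_*(\gamma))$ acting by conjugation on each $\gamma_k/\gamma_{k+1}$, so $A/A_1$ is the faithful image.
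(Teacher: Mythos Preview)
Your proof is correct and takes a genuinely different route from the paper. The paper derives this theorem as the special case $M=\mathbb{N}$, $\phi=\gamma$ of the more general Theorem~\ref{thm:auto} about arbitrary $A$-invariant filters, and proves the latter by working directly inside $A$ and $G$: it checks that $\Delta\phi$ is a filter on $A$ (using Hall--Witt for the commutator bound, exactly as you do for $[A_i,A_j]\leq A_{i+j}$), and then by a \emph{second} explicit Hall--Witt computation verifies by hand that each $D_a:\bar{x}\mapsto\overline{[x,a]}$ is a derivation of $L_*(\phi)$. Your approach instead packages both facts into a single filter $H_i=\gamma_i\cdot A_i$ on the semidirect product $G\rtimes A$: once $[H_i,H_j]\leq H_{i+j}$ is established, the Lazard construction hands you $L_*(H)$, and the derivation property of $L_*(A)$ on the ideal $L_*(\gamma)$---as well as the fact that the resulting map $L_*(A)\to\Der(L_*(\gamma))$ is a Lie homomorphism---drops out of Jacobi as the adjoint action of a subalgebra on an ideal. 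This is conceptually tidier and avoids re-running Hall--Witt for the derivation step; the paper's direct argument, on the other hand, is phrased so as to carry over without change to filters graded by arbitrary pre-ordered commutative monoids.
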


Both $L(\gamma)$ and $\Der(L(\gamma))$ are objects
whose structure we can capture through linear algebra and related efficient algorithms as in \cite{deGraaf}.  Meanwhile even knowing
a generating set for $\Aut(G)$ seems impossible for most groups.  So in this light Theorem~\ref{thm:auto-basic} has great practical
value.
\smallskip

Theorem~\ref{thm:ascending-Lie-basic} and especially its corollary appear classical to us, though we have not
discovered it in the literature so far.  So we give a synopsis.  The mechanics came from a largely unrelated setting.  We 
imitate Whitney's cap-product from algebraic topology; cf. \cite{Hatcher}*{Section 3.3}.  
The cap-product is a mixture of homology and cohomology groups.  Likewise, commutation products involving
the upper central series factors mix the lower and upper central series factors as follows
\begin{align*}
	\circ:&L_i(\gamma)\times L^{i+j}(\zeta)\bmto L^{j}(\zeta)
\end{align*}
To make a module we ``shuffle'' this product using a technique introduced by Knuth and Liebler to create new semifields from
old ones \citelist{\cite{Knuth}\cite{Liebler}}.  For example, we create
\begin{align*}
	\circ:&L_i(\gamma)\times \hom(L^j(\zeta),\mathbb{Q}/\mathbb{Z})\bmto \hom(L^{i+j}(\zeta),\mathbb{Q}/\mathbb{Z}).
\end{align*}
When the $L^i(\zeta)$ are finite, $\hom(L^i(\zeta),\mathbb{Q}/\mathbb{Z})\cong L^i(\zeta)$ leading to Corollary~\ref{coro:1}, 
but these isomorphisms are not canonical so it is more natural to retain the coefficients.

\section{Generalized ascending and descending central factors}
We aim for a generalization of Theorems~\ref{thm:ascending-Lie-basic} and \ref{thm:auto-basic} that introduces 
Lie products for groups that are graded by monoids other than the positive integers.  The purpose is to find new structure.  
By permitting acyclic gradings we can first profit from all the known Lie methods with standard gradings, and then
add recursive tools to refine the rings and modules by increasing the grading parameter.
\smallskip

Throughout we fix $M=\langle M,+,0,\prec\rangle$
a pre-ordered commutative monoid which in particular requires that for all $s\in M$, $0\prec s$ and
for all $s,t,u,v\in M$, if $s\prec t$ and $u\prec v$ then $s+u\prec t+v$.  In general we think of $M=\mathbb{N}^n$
with either the point-wise or the lexicographic pre-order.
In a group $G$ commutators are $[g,h]=g^{-1} g^h=g^{-1}h^{-1}gh$ and for $X,Y\subseteq G$, 
$[X,Y]=\langle [x,y] : x\in X,y\in Y\rangle$.  We liberally assume the usual commutator calculus such as in \cite{Robinson}*{Chapter 5}.
\smallskip

Following \cite{Wilson:alpha}*{Section 3}, a {\em filter} $\phi:M\to 2^G$ on a group $G$
is a function on a pre-ordered
commutative monoid $M=\langle M, +,0,\prec\rangle$ 
into  the subsets $2^G$ of $G$ with the following properties.  Each $\phi_s$ is a subgroup and for all $s$ and $t$ in $M$, 
\begin{align}\label{def:filter}
	[\phi_s,\phi_t] & \leq \phi_{s+t}\qquad \&\qquad s\prec t  \Rightarrow \phi_s\geq \phi_t.
\end{align}
Thus the burden of explaining the structure of commutation is shared between the operation and the ordering on the monoid.
For instance, it is possible to give a total order, i.e. a series of subgroups, but use indices that come from a non-cyclic monoid 
so that the commutation between the series terms can be far more intricate than by in usual central series.

To every filter $\phi:M\to 2^G$ there is an associated  boundary filter $\partial\phi:M\to 2^G$ given by
$\partial_s \phi = \langle \phi_{s+t} : t\neq 0\rangle$.   Fix a commutative unital associative ring $R$ and define the following $R$-modules.
\begin{align*}
	L_0(\phi;R) & = 0 & 
		s\neq 0 \Rightarrow L_s(\phi;R) & = (\phi_s/\partial_s\phi)\otimes_{\mathbb{Z}} R.
\end{align*}
Every filter determines the following  $M$-graded Lie ring \cite{Wilson:alpha}*{Theorem 3.1}.
\begin{align*}
	L_*(\phi;R) & = \bigoplus_{s\in M} L_s(\phi;R)& (\bar{x}_s\otimes r)\circ (\bar{y}_t\otimes s) & = \overline{[x,y]}_{s+t}\otimes rs.
\end{align*}
Here, $\bar{x}_s$ denotes a coset in $L_s$, etc..  Set $L_*(\phi)=L_*(\phi;\mathbb{Z})$.  Actually, each $L_s(\phi;R)$
is a natural $R[\phi_0/\partial_0 \phi]$-module so the portion at the top of a filter can be regarded as storing operators.
\medskip

In a dual manner, a {\em layering} $\pi:M\to 2^G$ on a group $G$ is a function where each $\pi^s$ is a subgroup,
and for all $s$ and $t$ in $M$,
\begin{align}\label{def:layering}
	\left[\pi^s,\cap_{u\neq 0} \pi^{t+u}\right] & \leq \pi^t 
	& s\prec t & \Rightarrow \pi^s\leq \pi^t.
\end{align}
Form the boundary $\partial\pi:M\to 2^G$ in the reversed order as $\partial^s \pi = \cap_{t\neq 0} \pi^{s+t}$.
So, if $M=\mathbb{N}$ then  $\partial^i \pi=\pi^{i+1}$.  If $G=\pi^M=\langle \pi^s : s\in M\rangle$ then \eqref{def:layering} says simply that
$[G,\pi^{i+1}]\leq \pi^i$ for every $i$.  This will be recognized as Hall's condition for a series to be called ``central''.
Using general monoids, which may not be well-ordered, it is no longer possible to simply take successors to move up  hence
$\partial^s\pi$ is used instead.  Also, the flexibility to adjust $\pi^0$ and $\pi^M$ makes it possible to define such series in 
setting that are non-nilpotent but still related, e.g. in automorphism groups of nilpotent groups.

These definitions determine abelian groups as follows (proved in Section~\ref{sec:strata}).
\begin{align*}
	L^0(\pi) & = 0 & s\neq 0 \Rightarrow L^s(\pi) = \partial^s\pi/\pi_s.
\end{align*}
To state the main result we need  a version with coefficients.  For that we fix an $R$-module $Q$ and set
\begin{align*}
	L^s(\pi;Q) & = \hom_{\mathbb{Z}}(L^s(\pi), Q).
\end{align*}
While filters give natural ring structures, layerings produce natural modules for these rings.
A ring is always better understood with a module.

\begin{thm}\label{thm:layers}
For every filter $\phi:M\to 2^G$ and a layering $\pi:M\to 2^G$, if
\begin{align}\label{def:sift}
	(\forall & s,t\in M) & [\phi_s,\pi^{s+t}]  & \leq \pi^t,
\end{align}
then $L^*(\pi;Q)$ is a graded module of the $M$-graded Lie ring $L_*(\phi;R)$.
\end{thm}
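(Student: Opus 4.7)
The plan is to build a commutator pairing at the subgroup level from the sifting hypothesis~\eqref{def:sift}, descend it to the graded pieces, dualise via $\hom(-,Q)$ to obtain a grade-raising action, and verify the Lie module identity via Hall--Witt.

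First, set up a raw pairing $\phi_s \times \partial^{s+t}\pi \to \partial^t\pi$ by $(x,g) \mapsto [x,g]$: for any $v \neq 0$, $g \in \partial^{s+t}\pi \leq \pi^{s+t+v}$ and~\eqref{def:sift} forces $[x,g] \in \pi^{t+v}$, so $[x,g] \in \partial^t\pi$. I then show this pairing descends to a $\mathbb{Z}$-bilinear map $L_s(\phi) \times L^{s+t}(\pi) \to L^t(\pi)$. Bi-additivity modulo $\pi^t$ follows from the standard expansions $[xx',g] = [x,g]^{x'}[x',g]$ and $[x,gg'] = [x,g'][x,g]^{g'}$ once the appearing conjugations are shown to be trivial modulo $\pi^t$; for this I invoke~\eqref{def:sift} at $s=0$ (every $\phi_s \leq \phi_0$ normalises each $\pi^u$ modulo $\pi^u$) together with~\eqref{def:layering} (giving $[\pi^{s+t+u}, \partial^u\pi] \leq \pi^u$, so that commutation with $g \in \partial^{s+t}\pi$ stays in $\pi^t$). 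Descent through $\partial_s\phi$ on the left uses each generator $\phi_{s+v}$ ($v \neq 0$) with the re-indexing $\partial^{s+t}\pi \leq \pi^{(s+v)+t}$ and then~\eqref{def:sift}; descent through $\pi^{s+t}$ on the right is~\eqref{def:sift} directly. Tensoring the left factor with $R$ extends this to an $R$-bilinear descent.

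Next, apply the Knuth--Liebler shuffle to define
\[
  L_s(\phi;R) \times L^t(\pi;Q) \longrightarrow L^{s+t}(\pi;Q),
  \qquad (x \cdot f)(\bar g) = f(x \cdot \bar g)
\]
for $\bar g \in L^{s+t}(\pi)$; bilinearity is immediate and the degrees align with the $M$-grading on $L_*(\phi;R)$. It remains to verify the Lie module identity, which after unwinding the shuffle reduces to the $G$-congruence
\[
  [[x,y], g] \;\equiv\; [x, [y, g]] \cdot [y, [x, g]]^{-1} \pmod{\pi^u}
\]
for representatives $x \in \phi_s$, $y \in \phi_t$, $g \in \partial^{s+t+u}\pi$ (with a sign flip if the dual-module convention requires it). I derive this from the Hall--Witt identity
\[
  [[x, y^{-1}], g]^y \cdot [[y, g^{-1}], x]^g \cdot [[g, x^{-1}], y]^x = 1
\]
by reducing each factor modulo $\pi^u$. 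The outer conjugations by $y,g,x$ trivialise because the innermost commutator of each factor already lies in $\pi^u$ (two applications of~\eqref{def:sift}), and further conjugation by $y, x \in \phi_0$ keeps it there via~\eqref{def:sift} at $s=0$, while conjugation by $g$ uses the layering bound above. The sign-flips from $y^{-1}, g^{-1}, x^{-1}$ produce further nested commutators that are controlled by alternating applications of~\eqref{def:sift} and~\eqref{def:layering}, analogous to the book-keeping used to make $L_*(\phi)$ a Lie ring in~\cite{Wilson:alpha}.

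\emph{Main obstacle.} The final step is the crux: the nested-commutator book-keeping that collapses Hall--Witt modulo $\pi^u$ to the clean Jacobi-type congruence. General $M$ makes this genuinely delicate because one cannot freely subtract indices, so several correction estimates must be routed through $\phi_0$ (for conjugation by $\phi$) or through~\eqref{def:layering} (for conjugation by $\pi$) rather than through the most natural index-shift bound one would use when $M = \mathbb{N}$.
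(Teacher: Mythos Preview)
Your proposal is correct and follows essentially the same architecture as the paper: build the biadditive pairing $L_s(\phi)\times L^{s+t}(\pi)\to L^t(\pi)$ from sifting, apply the Knuth--Liebler shuffle to obtain the graded action, and derive the Lie-module law from Hall--Witt. Two small remarks: the sign flip you hedge on is indeed required---the paper defines $(\bar x\otimes r)\cdot f:\bar y\mapsto -(\bar x\circ\bar y)fr$, and without that minus the identity comes out with the wrong sign; and rather than starting from the cyclic three-term Hall--Witt and massaging the conjugates and inverses, the paper uses the pre-expanded form
\[
[[x,y],z]=[y^{-1},[x^{-1},z]^{-1}]\,[[y^{-1},[x^{-1},z]^{-1}],xy]\,[x,[y^{-1},z^{-1}]]\,[[x,[y^{-1},z^{-1}]],zy],
\]
which isolates the two correction terms and lets them be killed directly by sifting, sidestepping the nested book-keeping you flag as the main obstacle.
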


 Observe that in the case where $\phi=\gamma$ and $\pi=\zeta$ we know
that for all $i,j>0$, $[\gamma_i,\zeta^{i+j}]\leq \zeta^j$.  So Theorem~\ref{thm:ascending-Lie-basic} is a consequence
of Theorem~\ref{thm:layers}.
\smallskip

The next important thrust is to push the effects of a filter into the automorphism group and thus
linearize various properties of automorphisms that once required more difficult inspection.
Fix a group $G$ and a group homomorphism $A\to \Aut(G)$.  For $x\in G$ and $a\in A$
we define $[x,a]  = x^{-1} x^a$.

\begin{thm}\label{thm:auto}
For a filter $\phi:M\to (2^G)^A$ (that is a filter of $A$-invariant subgroups), the function $\Delta\phi:M\to 2^A$ given by
\begin{align*}
	\Delta_s\phi & = \{ a\in A : \forall t\in M, [\phi_t,a]\leq \phi_{t+s}\}
\end{align*}
is a filter and there is a natural graded Lie ring homomorphism from $L_*(\Delta\phi)$ into the derivation Lie ring $\Der(L_*(\phi))$.
\end{thm}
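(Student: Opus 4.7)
\emph{Proof plan.} The theorem has two pieces: verifying that $\Delta\phi$ is itself a filter, and then constructing a graded Lie ring homomorphism $L_*(\Delta\phi)\to\Der(L_*(\phi))$. Both rest on iterated commutator calculus---the Hall--Witt identity (or equivalently the three-subgroup lemma)---combined with the filter containments and the $A$-invariance of each $\phi_t$. All calculations take place inside the semidirect product $G\rtimes A$, where elements of $A$ act on $G$ and $[g,a]=g^{-1}g^a\in G$.

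\emph{$\Delta\phi$ is a filter.} For each $s\in M$, $\Delta_s\phi\le A$ is a subgroup by the expansions
\[
[x,a^{-1}]=\bigl([x,a]^{-1}\bigr)^{a^{-1}},\qquad [x,ab]=[x,b]\cdot[x,a]\cdot[[x,a],b],
\]
together with $A$-invariance of $\phi_{t+s}$ (so conjugation by $a^{-1}\in A$ keeps elements inside $\phi_{t+s}$) and the absorption $\phi_{t+2s}\le\phi_{t+s}$ from $0\prec s$. Monotonicity $s\prec t\Rightarrow \Delta_s\phi\ge\Delta_t\phi$ is immediate from $\phi_{u+t}\le\phi_{u+s}$. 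For the commutator axiom $[\Delta_s\phi,\Delta_t\phi]\le\Delta_{s+t}\phi$ I invoke Hall--Witt: given $a\in\Delta_s\phi$, $b\in\Delta_t\phi$, $x\in\phi_u$,
\[
[[x,a^{-1}],b]^{a}\cdot[[a,b^{-1}],x]^{b}\cdot[[b,x^{-1}],a]^{x}=1.
\]
The first and third factors lie in $\phi_{u+s+t}$ by hypothesis (using $[x,a^{-1}]\in\phi_{u+s}$ and $[b,x^{-1}]\in\phi_{u+t}$ together with the defining property of $\Delta\phi$), and $\phi_{u+s+t}$ is preserved by conjugation by each of $\phi_u$, $a$, $b$ (the filter axiom plus $A$-invariance). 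Hence $[[a,b^{-1}],x]\in\phi_{u+s+t}$, and a short rearrangement using $[a,b^{-1}]\equiv[a,b]^{-1}$ up to conjugation yields $[x,[a,b]]\in\phi_{u+s+t}$.

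\emph{Derivations from $\Delta\phi$.} For $a\in\Delta_s\phi$ and $t\in M$ with $t\ne 0$, define $d_a\colon L_t(\phi)\to L_{t+s}(\phi)$ by $\bar x_t\mapsto\overline{[x,a]}_{t+s}$. Well-definedness uses $[\partial_t\phi,a]\le\partial_{t+s}\phi$ (if $x\in\phi_{t+v}$ with $v\ne 0$ then $[x,a]\in\phi_{t+s+v}\le\partial_{t+s}\phi$); additivity follows from $[xy,a]=[x,a]\cdot[[x,a],y]\cdot[y,a]$ with the middle factor in $\phi_{t+s+t}\le\partial_{t+s}\phi$. The Leibniz rule $d_a[\bar x,\bar y]=[d_a\bar x,\bar y]+[\bar x,d_a\bar y]$ is the Jacobi identity in $L_*(\phi)$ applied to $(x,y,a)$, which is itself Hall--Witt reduced modulo $\partial_{t+u+s}\phi$ (the conjugating elements shift commutators by amounts absorbed into the boundary):
\[
\overline{[[x,y],a]}+\overline{[[y,a],x]}+\overline{[[a,x],y]}=0;
\]
antisymmetry of the Lie bracket in $L_*(\phi)$ then gives the Leibniz identity.

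\emph{The Lie homomorphism and the main obstacle.} The assignment $a\mapsto d_a$ defines a map $\Delta_s\phi\to\Der_s(L_*(\phi))$; it vanishes on $\partial_s\Delta\phi$ because $a\in\Delta_{s+v}\phi$ with $v\ne 0$ forces $[x,a]\in\phi_{t+s+v}\le\partial_{t+s}\phi$. Additivity of the induced map $L_s(\Delta\phi)\to\Der_s(L_*(\phi))$ again uses $[x,ab]=[x,b]\cdot[x,a]\cdot[[x,a],b]$ with the last factor swallowed by the boundary. The main obstacle is the bracket identity $d_{[a,b]}=[d_a,d_b]$: this requires applying Hall--Witt to the triple $(x,a,b)$ in $G\rtimes A$, which modulo $\partial_{t+s+s'}\phi$ (with $b\in\Delta_{s'}\phi$) gives the Jacobi identity $\overline{[[x,a],b]}+\overline{[[a,b],x]}+\overline{[[b,x],a]}=0$ in $L_{t+s+s'}(\phi)$. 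Rearranging and matching the commutator-of-endomorphisms convention against the group commutator $[a,b]=a^{-1}b^{-1}ab$ yields $d_{[a,b]}(\bar x)=d_a d_b(\bar x)-d_b d_a(\bar x)$. Naturality in $\phi$ is then formal, since every step is built from commutators and quotients preserved by any morphism of filters.
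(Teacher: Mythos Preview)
Your proposal is correct and follows essentially the same approach as the paper: both verify the filter axioms for $\Delta\phi$ via elementary commutator identities together with one application of the Hall--Witt identity in $G\rtimes A$, and both establish that $a\mapsto d_a$ lands in the graded derivations by a second Hall--Witt computation reduced modulo the boundary. Your write-up is in fact slightly more complete than the paper's: the paper stops after showing each $D_a$ is a derivation of the appropriate degree, whereas you also explicitly check that the induced map on $L_*(\Delta\phi)$ is well-defined (vanishing on $\partial_s\Delta\phi$), additive, and bracket-preserving ($d_{[a,b]}=[d_a,d_b]$), which is what the phrase ``graded Lie ring homomorphism'' actually demands.
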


Note Theorem~\ref{thm:auto-basic} follows from Theorem~\ref{thm:auto}.
 
\section{The strata $L^t(\pi;Q_R)$ and the action by $L_t(\phi;R)$}\label{sec:strata}
Since filters were covered in detail along with examples in \cite{Wilson:alpha}, we can forgo most of that and focus on 
the new aspects which concern the interplay with layerings.
We begin by confirming the claims that $L^s(\pi;R)$ is well-defined.  

Fix a layering $\pi:M\to 2^G$.  
The universe being described here is between the subgroup $\pi^0$ and
$\pi^M=\langle	\pi^s:s\in M\rangle$.  For every $s,u$, $s\prec s+u$ 
which implies that $\pi^s\leq \partial^s \pi=\cap_{u\neq 0} \pi^{s+u}$.
Hence, for all $s$,
\begin{align*}
	[\pi^M,\pi^s] & = [\pi^M,\partial^s \pi] 
		 \leq \prod_{t\in M} [\pi^t,\partial^s \pi]\leq \pi^s.
\end{align*}
Thus $\pi^s$ and $\partial^s\pi$ are normal in $\pi^M$ and 
$L^s(\pi)=\partial^s \pi/\pi^s$ is central in $\pi^M/\partial^s \pi$.  
In particular $L^s(\pi)$ is abelian.  Also notice that 
$\partial\pi:M\to 2^G$, and as $\pi$ is order preserving so too is
$\partial\pi$.  Finally,
\begin{align*}
	[\pi^M,\partial^s(\partial\pi)] & =
		\bigcap_{t\neq 0} [\pi^M, \partial^{s+t}\pi]
		=\bigcap_{t,u\neq 0} [\pi^M, \pi^{s+t+u}]
		\leq \bigcap_{t\neq 0}\pi^{s+t}=\partial^s\pi.
\end{align*}
In particular, $\partial\pi$ is a layering as well.

Unlike the case of filters there is no immediate commutation product to impose on 
$L^*(\pi)$.  Nevertheless, we recognize within these ingredients the mechanics similar to the cap-product in
algebraic topology; cf \cite{Hatcher}*{Section 3.3}.  We make up a product of a similar shape.
\medskip

Straining our metaphors we say that a filter $\phi:M\to 2^G$ {\em sifts} a layering $\pi:M\to 2^G$ if
\eqref{def:sift} holds.

\begin{prop}
If a filter $\phi:M\to 2^G$ sifts a layering $\pi:M\to 2^G$ then for $s\neq 0$,
$\circ:L_s(\phi)\times L^{s+t}(\pi)\bmto L^t(\pi)$ defined by
\begin{align*}
		& \bar{x}\circ \bar{y} \equiv [x,y]\pmod{\pi^t}
\end{align*}
is well-defined and biadditive.
\end{prop}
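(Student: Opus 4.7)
The plan is to check three things in order: (i) the commutator $[x,y]$ lands in $\partial^t\pi$, so its reduction modulo $\pi^t$ lives in $L^t(\pi)$; (ii) this reduction depends only on the classes of $x$ in $\phi_s/\partial_s\phi$ and of $y$ in $\partial^{s+t}\pi/\pi^{s+t}$; and (iii) $\circ$ is biadditive. I fix representatives $x\in\phi_s$ and $y\in\partial^{s+t}\pi=\bigcap_{u\neq 0}\pi^{s+t+u}$.

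Step (i) is immediate: for each $u\neq 0$, $y\in\pi^{s+t+u}$, so sifting gives $[x,y]\in[\phi_s,\pi^{s+t+u}]\leq\pi^{t+u}$, and intersecting over $u\neq 0$ places $[x,y]$ in $\partial^t\pi$. Steps (ii) and (iii) use the same two commutator identities $[xa,y]=[x,y]^a[a,y]$ and $[x,yb]=[x,b][x,y]^b$ (and their left-variable analogs) to reduce the problem to showing that the various extra factors and conjugation-corrections lie in $\pi^t$.

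For well-definedness in $y$, take $b\in\pi^{s+t}$: sifting gives $[x,b]\in\pi^t$, and the layering axiom $[\pi^{s+t},\partial^t\pi]\leq\pi^t$ absorbs $[[x,y],b]$, using $[x,y]\in\partial^t\pi$ from (i). For well-definedness in $x$, since $\partial_s\phi=\langle \phi_{s+v}:v\neq 0\rangle$ it suffices to take $a\in\phi_{s+v}$; then $y\in\partial^{s+t}\pi\leq\pi^{(s+v)+t}$ and $[x,y]\in\partial^t\pi\leq\pi^{t+(s+v)}$, so sifting at the index $s+v$ puts both $[a,y]$ and $[[x,y],a]$ in $\pi^t$. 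Biadditivity in $y$ comes from $[x,y_1y_2]=[x,y_2][x,y_1]^{y_2}$ once $[[x,y_1],y_2]$ is controlled by the layering axiom (taking any $u\neq 0$ with $y_2\in\pi^{s+t+u}$), and biadditivity in $x$ comes from $[x_1x_2,y]=[x_1,y]^{x_2}[x_2,y]$ once $[[x_1,y],x_2]\in[\phi_s,\partial^t\pi]\leq[\phi_s,\pi^{s+t}]\leq\pi^t$ by sifting, where I use $s\neq 0$ so that $\partial^t\pi\leq\pi^{s+t}$.

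The main obstacle is bookkeeping of monoid indices: each inclusion requires identifying an index on $\pi$ sufficiently far out in $M$ so that either sifting or the layering axiom applies. The hypothesis $s\neq 0$ enters precisely so that $\partial^t\pi\leq\pi^{s+t}$, a fact invoked several times; the rest is routine commutator calculus.
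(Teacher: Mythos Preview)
Your proof is correct and follows essentially the same route as the paper: first showing $[\phi_s,\partial^{s+t}\pi]\leq\partial^t\pi$ (your step (i), the paper's \eqref{eq:phi-partial}), then handling well-definedness and biadditivity via the commutator identities $[xa,y]=[x,y]^a[a,y]$ and $[x,yb]=[x,b][x,y]^b$, disposing of the correction terms by sifting and the layering axiom exactly as the paper does. Your treatment is in fact more explicit than the paper's---you spell out the conjugation corrections $[[x,y],a]$ and $[[x,y],b]$ in the well-definedness step where the paper simply asserts that the map ``factors through'', and you isolate precisely where $s\neq 0$ is used (namely $\partial^t\pi\leq\pi^{s+t}$)---but the underlying argument is the same.
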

\begin{proof}
Under the assumptions $\phi$ also sifts $\partial\pi$ and $\partial\phi$ sifts $\pi$:
\begin{align}\label{eq:phi-partial}
	[\phi_s,\partial^{s+t}\pi] & 
		= \bigcap_{u\neq 0}[\phi_s, \pi^{s+t+u}] 
		\leq \bigcap_{u\neq 0} \pi^{t+u}=\partial^{t}\pi.\\
	[\partial_s \phi,\pi^{s+t}] 
	& = \langle [\phi_{s+u},\pi^{s+t}] : u\neq 0\rangle
	 \leq \langle [\phi_{s+u},\pi^{s+t+u}] : u\neq 0\rangle
	 \leq \pi^s.
\end{align}
This means that the commutation map $\phi_s\times\partial^{s+t}\pi\to \partial^t\phi$ factors through
$\circ:L_s(\phi)\times L^{s+t}(\pi)\to L^t(\phi)$.

Now we prove biadditivity of $\circ$.  For all $x,y\in \phi_s$ and all $z\in \partial^{s+t}\pi$,
$[[x,z],y]\in [[\phi_s,\partial^{s+t}\pi],\phi_s]\leq [\partial^t\pi,\phi_s]$.
As $s\prec s+t$,
$[\phi_s,\partial^t\pi]\leq [\phi_s,\partial^{s+t}\pi]\leq \pi^t$.  Hence 
\begin{align*}
	(\bar{x}+\bar{y})\circ \bar{z}
		& \equiv [x,z][[x,z],y][y,z]
		 \equiv \bar{x}\circ \bar{z}+\bar{y}\circ\bar{z}\pmod{\pi^t}.
\end{align*}
Also for $x\in \phi_s$, $y,z\in \partial^{s+t}\pi$, $[[x,y],z]\in [[\phi_s,\partial^{s+t}\pi],\partial^{s+t}\pi]\leq [\partial^{t}\pi,\pi^M]\leq \phi^t$. So,
\begin{align*}
	\bar{x}\circ (\bar{y}+\bar{z}) 
		& \equiv  [x,z][x,y][[x,y],z]
		 \equiv  \bar{x}\circ \bar{y}+\bar{x}\circ \bar{y} \pmod{\pi^t}.
\end{align*}
\end{proof}

There is of course a similar map $L^{s+t}(\pi)\times L_s(\phi)\bmto L^t(\pi)$ 
but this is nothing more than the negative transpose of
the map above since $[x,y]^{-1}=[y,x]$.  A more pressing concern is the awkwardness of the indices in the product -- it is not graded but nearly.  
If our monoid has cancellation we could pass to the associated Grothendieck group and rewrite the grading as $L^s\times L_t\bmto L^{t-s}$, but
still we would need to clarify at each stage that $t-s\in M$ in order that the product be defined.  We promote a remedy that makes the resulting algebra more natural at the cost of introducing coefficients.
\medskip

Suppose we have an arbitrary bimap (biadditive map) $*:U\times V\bmto W$ and an abelian group $Q$.  We create
a second bimap $\#:U\times \hom_{\mathbb{Z}}(W,Q)\bmto \hom_{\mathbb{Z}}(V,Q)$ with the following definition:
\begin{align}
	(\forall & u\in U, \forall f\in\hom_{\mathbb{Z}}(W,Q)) & 	
		(u\# f) &: v\mapsto (u*v)f.
\end{align}
Such shuffling of products were introduced for nonassociative division rings (semifields) by Knuth and reformulated in a basis free way 
by Liebler \citelist{\cite{Knuth}\cite{Liebler}}.  Though Knuth called these ``transposes'', we use the less overloaded term of ``shuffle'' as suggested by
Uriya First. Knuth-Liebler shuffles are  the subject of an ongoing study of bimap categories by First and the author \cite{FW}.
Here we use them simply to configure the objects above into familiar vocabulary of Lie modules.
\medskip

Now let $Q=Q_R$ be an $R$-module for a commutative associative unital ring $R$.
As defined $L^s(\pi;Q)=\hom_{\mathbb{Z}}(L^s(\pi),Q)$.  When we apply a Knuth-Liebler shuffle to the products
 $\circ:L_s(\phi)\times L^{s+t}(\pi)\bmto L^{t}(\pi)$ we arrive at a new product
\begin{align*}
	\cdot:L_s(\phi;R)\times L^t(\pi;Q)\bmto L^{s+t}(\pi;Q)
\end{align*}
where for $\bar{x}\in L_s(\phi)$, $r\in R$, $f:L^t(\pi)\to Q\in L^t(\pi;Q)$, $(\bar{x}\otimes r)\cdot f$ is the following map
$L^{s+t}(\pi)\to Q$:
\begin{align}\label{def:module}
	 (\bar{x}\otimes r)\cdot f&:\bar{y}\mapsto -(\bar{x}\circ\bar{y})f r.
\end{align}
This new product is $R$-bilinear and further permits us to take the direct sum over all $s,t\in M$ to create a single graded $R$-bimap
$L_*(\phi;R)\times L^*(\pi;Q)\bmto L^*(\pi;Q)$.  It remains to show this makes $L^*(\pi;Q)$ into a Lie module for $L_*(\phi;R)$.

\begin{prop}\label{prop:Lie-module}
For all $\bar{x}\in L_s(\phi)$, all $\bar{y}\in L_t(\phi)$, and all $\bar{z}\in L^{s+t+u}(\pi)$, 
\begin{align*}
	[\bar{x},\bar{y}]\circ \bar{z} 
	\equiv \bar{x}\circ (\bar{y}\circ\bar{z})-\bar{y}\circ (\bar{x}\circ \bar{z})\pmod{ \pi^u}.
\end{align*}
\end{prop}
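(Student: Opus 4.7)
The plan is to derive the identity from the Hall--Witt identity in $G$, reducing modulo $\pi^u$ and using the sifting condition repeatedly to discard lower-weight debris. Since $L_0(\phi;R)=0$, we may assume $s,t\neq 0$; choose lifts $x\in\phi_s$, $y\in\phi_t$, $z\in\partial^{s+t+u}\pi$.

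First I would collect a reservoir of ``mod-$\pi^u$ invariance'' facts. Iterated use of the sifting bound $[\phi_s,\partial^{s+t}\pi]\leq\partial^t\pi$ (equation \eqref{eq:phi-partial}) shows that each of the three triple commutators $[x,y^{-1},z]$, $[y,z^{-1},x]$, $[z,x^{-1},y]$ already lies in $\partial^u\pi$. Independently, choosing the weight $v:=s$ or $v:=t$ (both nonzero) inside $\partial^u\pi=\bigcap_{v\neq 0}\pi^{u+v}$ and then applying sifting gives $[\phi_s,\partial^u\pi]\leq\pi^u$ and $[\phi_t,\partial^u\pi]\leq\pi^u$, while the layering axiom $[\pi^M,\partial^u\pi]\leq\pi^u$ was already noted at the start of this section. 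Together these mean that conjugation by any of $x$, $y$, $z$, or by any element of $\pi^M$ or a filter subgroup already introduced, is trivial on $\partial^u\pi$ modulo $\pi^u$.

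Now applying Hall--Witt, $[x,y^{-1},z]^y\cdot[y,z^{-1},x]^z\cdot[z,x^{-1},y]^x=1$, and invoking the conjugation-triviality just assembled to drop the outer conjugates reduces this to $[x,y^{-1},z]\cdot[y,z^{-1},x]\cdot[z,x^{-1},y]\equiv 1\pmod{\pi^u}$. Next I would replace each inverted argument by an outer inverse using $[a,b^{-1}]=[a,b]^{-b^{-1}}$, $[a^{-1},b]=[a,b]^{-a^{-1}}$, and $[ab,c]=[a,c]^b[b,c]$; every correction commutator that surfaces has a factor from $\phi_{s+t}$, $\partial^{s+u}\pi$, or $\partial^{t+u}\pi$ paired with $x$, $y$, or $z$, and lies in $\pi^u$ by the sifting bounds collected above. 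After this cleanup Hall--Witt becomes $[[x,y],z]\cdot[[y,z],x]\cdot[[z,x],y]\equiv 1\pmod{\pi^u}$. Since the quotient $L^u(\pi)=\partial^u\pi/\pi^u$ is abelian, rearranging gives $[[x,y],z]\equiv[x,[y,z]]\cdot[y,[z,x]]\pmod{\pi^u}$, and one more application of the inversion trick to convert $[y,[z,x]]=[y,[x,z]^{-1}]$ into $[y,[x,z]]^{-1}$ yields $[[x,y],z]\equiv[x,[y,z]]\cdot[y,[x,z]]^{-1}\pmod{\pi^u}$, which is the claim rewritten in $\circ$-notation.

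The main obstacle is the bookkeeping when replacing inverted arguments: each expansion of $[a,b^{-1}]$ or $[ab,c]$ spawns correction terms which one must verify fall inside $\pi^u$ rather than merely $\partial^u\pi$. In every instance this reduces to a single application of sifting with an appropriate choice of weight, invariably exploiting $s\neq 0$ or $t\neq 0$.
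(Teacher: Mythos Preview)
Your proposal is correct and follows essentially the same strategy as the paper: apply Hall--Witt and use the sifting bounds \eqref{def:sift}, \eqref{eq:phi-partial} together with $[\pi^M,\partial^u\pi]\leq\pi^u$ and $[\phi_s,\partial^u\pi]\leq\pi^u$ to kill every correction term modulo $\pi^u$. The only difference is cosmetic: the paper starts from a form of Hall--Witt already solved for $[[x,y],z]$, namely
\[
[[x,y],z]=[y^{-1},[x^{-1},z]^{-1}]\,\bigl[[y^{-1},[x^{-1},z]^{-1}],xy\bigr]\,[x,[y^{-1},z^{-1}]]\,\bigl[[x,[y^{-1},z^{-1}]],zy\bigr],
\]
so that only the second and fourth factors need to be shown to lie in $\pi^u$, whereas your route via the cyclic form $[x,y^{-1},z]^y[y,z^{-1},x]^z[z,x^{-1},y]^x=1$ spreads the same bookkeeping over several inversion-removal steps.
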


\begin{proof}
From the Hall-Witt identity it follows that for all $x,y,z$:
\begin{align*}
	[[x,y],z] = [y^{-1},[x^{-1},z]^{-1}][[y^{-1},[x^{-1},z]^{-1}],xy] [x,[y^{-1},z^{-1}]] [ [x,[y^{-1},z^{-1}]],zy].
\end{align*}
Fix $x\in \phi_s$, $y\in \phi_t$, $z\in \partial^{s+t+u} \pi$.  By \eqref{def:sift} and \eqref{eq:phi-partial} we note the following containments.
\begin{align*}
	[[y^{-1},[x^{-1},z]^{-1}],xy] & \in [[\phi_t,[\phi_s,\partial^{s+t+u}\pi],\phi_s\phi_t]
		\leq [[\phi_t,\partial^{t+u}\pi],\phi_0]\leq [\pi^{u+0},\phi_0]\leq \pi^u\\
	[[x,[y^{-1},z^{-1}]],zy] & \in [[\phi_s,[\phi_t,\partial^{s+t+u}\pi]],\partial^{s+t+u}\pi\phi_0]\leq [\pi^u,\pi^M][\pi^u,\phi_0]\leq \pi^u.
\end{align*}  
Hence, in the original product $\circ$ we find the following identity.
\begin{align*}
	[\bar{x},\bar{y}]\circ \bar{z} & \equiv [[x,y],z] \pmod{\pi^u}\\
		&\equiv [y^{-1},[x^{-1},z]^{-1}][[y^{-1},[x^{-1},z]^{-1}],xy] [x,[y^{-1},z^{-1}]] [ [x,[y^{-1},z^{-1}]],zy]\\
		& \equiv [y^{-1},[x^{-1},z]^{-1}][x,[y^{-1},z^{-1}]]\\
		& \equiv \bar{x}\circ(\bar{y}\circ \bar{z})-\bar{y}\circ (\bar{x}\circ \bar{z}).
\end{align*}
\end{proof}

\section{Proof of Theorem~\ref{thm:layers}}
To complete the proof of Theorem~\ref{thm:layers} we must now consider Proposition~\ref{prop:Lie-module}
in the presence of the appropriate Knuth-Liebler shuffle.

Fix $\bar{x}\in L_s(\phi)$, $\bar{y}\in L_t(\phi)$, $r,s\in R$, and $f:L_u(\pi)\to Q\in L^u(\pi;Q)$.
Recall $R$ is commutative and all $\cdot$'s are $R$-bilinear.  So for all $\bar{z}\in L^{s+t+u}(\pi)$, following
\eqref{def:module} we find the following holds.  
\begin{align*}
	(\bar{z})((\bar{x}\otimes r)\circ(\bar{y}\otimes s) \cdot f)
		& = -((\bar{x}\circ\bar{y})\circ\bar{z})f rs\\
		& = -(\bar{x}\circ(\bar{y}\circ \bar{z}))(fs)r+(\bar{y}\circ(\bar{x}\circ\bar{z}))(f r)s\\
		& = (\bar{y}\circ \bar{z})((\bar{x}\otimes r)\cdot (fs))-(\bar{x}\circ\bar{z})(\bar{y}\otimes s\cdot (fr))\\
		& = -(\bar{z})(\bar{y}\otimes s)\cdot ((\bar{x}\otimes r)\cdot f)+(\bar{z})(\bar{x}\otimes r)\cdot (\bar{y}\otimes s\cdot f).
\end{align*}
Hence we have
\begin{align*}
	((\bar{x}\otimes r)\circ (\bar{y}\otimes s)) \cdot f
		& = (\bar{x}\otimes r)\cdot ((\bar{y}\otimes s)\cdot f)-(\bar{y}\otimes s)\cdot ((\bar{x}\otimes r)\cdot f).
\end{align*}
So in $L^*(\pi;Q)$ is a graded Lie module for $L_*(\phi;R)$.
\hfill $\Box$

\section{Proof of Theorem~\ref{thm:auto}}
Now we prove our second claim, Theorem~\ref{thm:auto}, which claims that for an $A$-group $G$ with
an $A$-invariant filter $\phi:M\to 2^G$, there is a naturally induced filter $\Delta\phi:M\to 2^A$ as follows.
\begin{align*}
	\Delta_s\phi & = \{ a\in A : \forall t\in M, [\phi_t,a]\leq \phi_{t+s}\}.
\end{align*}
It further follows that $L_*(\Delta\phi)$ maps naturally into $\Der(L_*(\phi))$.

 Fix $s\in M$.  
Let $a,b\in \Delta_s \phi$.  So for all $t\in M$, and all $x\in \phi_t$, 
$[x,a^{-1} b]=[x^{-1},a]^{-b}[x,b]\in [\phi_t,a]^b [\phi_t,b]\leq \phi_{t+s}$ (using also the fact that $\phi$ is $A$-invariant).  
So $\Delta_s\phi$ is a subgroup of $A$.  Indeed, if $a\in A$ then as $\phi_t$ is $A$-invariant, $[\phi_t,a]\leq \phi_t=\phi_{t+0}$.  
So $a\in \Delta_0\phi$ proving that $A=\Delta\phi$.

Now suppose $s,t\in M$.  For each $u\in M$, $a\in \Delta_s\phi$, $b\in \Delta_t\phi$, $x\in\phi_u$ we find
\begin{align*}
	[x,[a,b]]  & = [b,[x^{-1},a^{-1}]]^{-xa} [a^{-1},[b^{-1},x]]^{-ba} \\
		& = [[x^{-1},a^{-1}],b]^{xa} [[b^{-1},x],a^{-1}]^{ba} \\
		& \in [\phi_{s+u},b] [\phi_{t+u},a] \leq \phi_{s+t+u}.
\end{align*}
So $[\phi_u,[\Delta_s \phi,\Delta_t \phi]]\leq \phi_{s+t+u}$.  Hence,
\begin{align*}
	[\Delta_s\phi,\Delta_t \phi] & \leq \Delta_{s+t}\phi.
\end{align*}

Finally, suppose that $s,t\in M$ and that $s\prec t$.
Take $a\in \Delta_t \phi$.  For each $u\in M$, $s+u\prec t+u$ and so $\phi_{s+u}\geq \phi_{t+u}$.  Hence,
$[\phi_u,a]\leq \phi_{t+u}\leq \phi_{s+u}$ which proves $a\in \Delta_s\phi$.  So $\Delta_s\phi\geq \Delta_t \phi$.
This proves $\Delta\phi$ is a filter.

Next we show that for each $s\in M-0$, $\Delta_s \phi$ acts on each $L_u(\phi)=\phi_u/\partial_u \phi$ as the identity.
For taking $a\in \Delta_s \phi$, $[\phi_u,a]\leq \phi_{u+s}\leq \partial_u \phi$.  Now define an action of
$\Delta_s \phi$ on $L_*(\phi)$ so that for each $u\in M-0$ and $a\in \Delta_s \phi$, 
$\bar{x}D_a= \overline{[x,a]}$ mapping $L_u(\phi)\to L_{s+u}(\phi)$.  First we observe that $[\phi_u,\Delta_s\phi]\in \phi_{u+s}$ and $[\partial_u \phi,\Delta_s\phi]\leq \partial_{s+u}\phi$ so that
$[yx,a]=[y,a]^x [x,a]\equiv [x,a] \pmod{\partial_{s+u} \phi}$.  So for each $u$,
$D_a:L_u(\phi)\mapsto L_{u+s}(\phi)$. Next take $x,y\in \phi_u$.  As $[x,a]\in \phi_{s+u}$ it follows that $[[x,a],y]\in \phi_{s+2u}\leq \partial_{s+u}\phi$.
Hence,
$(\bar{x}+\bar{y})D_a\equiv [xy,a]\equiv [x,a]^y[y,a]
\equiv [x,a] [[x,a],y] [y,a] \equiv [x,a][y,a]$ modulo $\partial_{s+u}\phi$.
Therefore $D_a$ is an additive endomorphism of $L_*(\phi)$.

Last we show that $D_a$ is derivation of $L_*(\phi)$.  For $u\in M-0$ and
$x,y\in \phi_u$,
\begin{align*}
	(\bar{x}\circ \bar{y})D_a & \equiv
		[[x,y],a] \equiv  [[x^{-1},a]^{-1},y^{-1}]^{-xy}[x,[y^{-1},a^{-1}]]^{ay} \pmod{\partial_{s+u}\phi}\\
	& \equiv [[x^{-1},a]^{-1},y^{-1}]^{-1}[x,[y^{-1},a^{-1}]]\\
	& \equiv \bar{x}D_a\circ\bar{y}+\bar{x}\circ\bar{y}D_a.
\end{align*}
So $D_a$ is a derivation on $L_*(\phi)$ shifting the grading by $s$.
\hfill $\Box$

\section{Examples}\label{sec:closing}

We close with some examples. In particular we demonstrate how filters and layerings
arise naturally within arbitrary groups and are not confined solely to nilpotent contexts.
\medskip

Consider first the case when $G=H\times K$.
Evidently $\gamma_i(H\times K)=\gamma_i(H)\times \gamma_i(K)$ and
$\zeta^j(H\times K)=\zeta^j(H)\times \zeta^j(K)$.  Using an acyclic monoid 
we get at the structure of $H$ and $K$ independently.  E.g. use $M=\mathbb{N}^2$ and
\begin{align*}
	\phi_{(a,b)} & = \gamma_a(H)\times \gamma_b(K) & \pi^{(a,b)} & = \zeta^a(H)\times \zeta^b(K).
\end{align*}
It follows that $\phi$ is a filter sifting the layering $\pi$.  More generally
given groups $\mathcal{H}$ and each $H\in\mathcal{H}$ has a filter $\phi^H:M_H\to 2^H$
sifting a layering $\pi_H:M\to 2^H$ then we obtain a filter 
$\phi:\prod_{H\in \mathcal{H}} M_H\to 2^{\prod\mathcal{H}}$ sifting
a layering $\pi:\prod_{H\in \mathcal{H}} M_H\to 2^{\prod\mathcal{H}}$ where
\begin{align*}
	\phi_m & = \prod_{H\in\mathcal{H}} \phi^H_{m(H)} &
	\pi_m & = \prod_{H\in\mathcal{H}} \pi_H^{m(H)}.
\end{align*}
We call this a {\em product filter} and respectively a {\em product layering}.
\smallskip

\subsection{Semi-classical filters}\label{sec:semi-classical}
Next suppose that $G$ is an arbitrary finite or pro-finite group.
In particular such groups have Sylow $p$-subgroups and also $p$-cores $O_p(G)$,
i.e. the intersection of all Sylow $p$-subgroups of $G$.
Given a set $\mathcal{P}$ of primes, the $\mathcal{P}$-core $O_{\mathcal{P}}(G)$
is a product of the $p$-cores for $p\in \mathcal{P}$.  In the case where $\mathcal{P}$
consists of all the primes we have the usual Fitting subgroups.  In any case 
we can create a product filter or layering.  
A natural example of this is to use the exponent-$p$ lower
central series of each $O_p(G)$.  As a result in that example the homogeneous
components of the associated Lie products are vector spaces over $\mathbb{Z}_p$
for the various $p\in \mathcal{P}$.  If it is more important that the filter
remain a series we can apply an order to the primes and then use the lexicographic
order on $\mathbb{N}^{\mathcal{P}}$.  A cyclic subfilter of a highly related form
was introduced by Eick-Horn called the $F$-series \cite{EH}*{Section 2}.

In order to compare these somewhat expected filters and layerings with more surprising
constructions we will describe the above constructions as {\em semi-classical}.  
While they need not have cyclic gradings they are all characterized by passing through the Fitting subgroup (or begin
at the nilpotent residual in the case of layerings) and are then built from 
classical filters of the primary components of a residually nilpotent group.

\subsection{Some non-semi-classical filters}\label{sec:refine}

Now we turn our attention to non-semi-classical filters and explain more carefully
what we are reporting in Figure~\ref{fig:data}.

In \cite{Wilson:alpha}*{Section 4} the three rings where 
introduced as tools to create filers (and now also layerings).
There are two further rings more compatible with decomposing the composition
products of layerings so we now include those as well.

Suppose we start with a known filter $\phi$ sifting a layering $\pi$.
We therefore have various bimaps $\circ:U\times V\bmto W$ from the products
between the homogeneous components of the Lie ring $L_*(\phi;R)$ and its module
$L^*(\pi;Q)$.  For any bimap we can define the following nonassociative rings.

First we have a Lie ring of derivations
\begin{align*}
	\Der(\circ) & = \{(f,g;h)\in{\frak gl}(U)\times{\frak gl}(V)\times {\frak gl}(W):
		 (uf)\circ v+u\circ (vg) = (u\circ v)h\}.
\end{align*}
Next we have three unital associative rings of left, mid, and right scalars.
(Here we use $\End(U)^{\bullet}$ to denote the opposite ring, equivalently
endomorphisms acting on the left.)
\begin{align*}
	\mathcal{L}(\circ) & = \{ (f,g)\in \End(U)^{\bullet}\times \End(W)^{\bullet} :
		 \forall u\forall v, (fu)\circ v=g(u\circ v)\},\\
	\mathcal{M}(\circ) & = \{ (f,g)\in \End(U)\times \End(V)^{\bullet} :
		 \forall u\forall v, (uf)\circ v= u\circ (gv)\},\\
	\mathcal{R}(\circ) & = \{ (f,g)\in \End(V)\times \End(W) : 
		\forall u\forall v, u\circ(vf)=(u\circ v)g\}.
\end{align*}
Finally we have an associative commutative unital ring called the {\em centroid}:
\begin{align*}
	\Cent(\circ) & = Z(\{(f,g;h)\in\End(U)\times \End(V)\times \End(W):\\
	& \qquad	 (uf)\circ v=(u\circ v)h=u\circ (vg)\}).
\end{align*}
Here $Z$ means to take the center of the ring.  Under very mild assumptions
on $\circ$ the conditions on $(f,g;h)$ already force commutativity.
There is a valuable interplay between these rings and automorphisms explained
in \cite{Wilson:beta}.

In \cite{Wilson:alpha}, $\mathcal{M}(\circ)$ was introduced as the ring of adjoints
and denoted $\Adj(\circ)$.  The rings $\mathcal{L}(\circ)$ and $\mathcal{R}(\circ)$
did not appear.  In any case one recognizes that these rings are each determined
by systems of linear equations.  Thanks to algorithms of Ronyai, Giani-Miller-Traeger,
Ivanyos, deGraaf, and many others, we can further compute such 
structure as Jacobson radicals, Levis subalgebras, Wedderburn-Mal'cev decompositions
etc.; cf. \citelist{\cite{deGraaf}\cite{IR:tapas}}.  Thus the action of these rings make $U$,
$V$, and $W$ into modules in various ways and the structure of the rings implies
characteristic structure of the modules.  Furthermore, the rings are compatible
with the original product $\circ$, e.g. $\circ$ is right $\mathcal{R}(\circ)$-linear -- which makes it ideal in decomposing the products $L_s(\phi;R)\times L^t(\pi;Q)\bmto L^{s+t}(\pi;Q)$. So the characteristic submodules of the various $U$, $V$,
and $W$ can be lifted back to characteristic subgroups of our original group.  See
\cite{Wilson:alpha} for expanded details and \cite{Maglione:filter} for some 
carefully computed examples.

\subsection{Data from small groups}
It had been shown in 
\cite{Wilson:alpha}*{Theorem 4.9} that using the ring $\mathcal{M}(\circ)$, there
is a positive proportion of all finite groups which admit a proper non-semi-classical
filter refining a semi-classical filter.  Although log-scale is the only option by
today's estimation techniques (see \cite{BNV:enum}*{Chapter 1}), it still could
be the case that in actual proportions very few groups benefit from this process.

In \cite{Maglione:filter}, Maglione created a series of algorithms for the computer
algebra system Magma which permit the physical construction of the $\mathcal{M}(\circ)$ filter refinements  described in Section~\ref{sec:refine}.  This permitted us to
conduct an experiment to measure the proportions of groups where refinement is found.
Adapting that code to also check for refinements by the other four rings we were 
astounded by the results.  Figure~\ref{fig:data} shows the proportion of groups
of order at most 1000 which have not only a non-classical Lie product, but in fact
a non-semi-classical Lie product.  While certainly in some cases the newly recovered
characteristic subgroups will have been knowable by other means we hasten to
mention that most of these groups (97\%) are nilpotent of class $2$ and have no
other standard characteristic subgroups other then the center and commutator.
So we are locating structure that is genuinely new.
Table~\ref{tab:tools} shows a breakdown by ring types for groups of
prime power order.  
\begin{table}
\begin{tabular}{|c||cccc|}
\hline
$p\backslash n$ & 4 & 5 & 6 & 7 \\
\hline\hline
 $2$ & 57\%	& 75\%	& 80\%	& 97\% \\ 	
 $3$ & 60\%	& 70\%	& 86\%	& 93\% \\ 	 
 $5$ &  60\% & 68\%	& 88\%	& 86\% \\ 	
 $7$ &  60\% & 67\%	& 88\%	& 81\% \\ 	
 $541$ &  60\%	& 51\% & 98\% 	& -- \\ 	
\hline
\end{tabular}
\caption{Lower bound on the proportion of $p^n$ order groups having a non-semi-classical refinement.}
\end{table}

\begin{table}
\begin{tabular}{|c||cccccc|}
\hline
$N$ & $2^4$ & $2^5$ & $2^6$ & $2^7$ & $2^8$ & $2^9$\\
Total $\%$ & 57\% & 75\% & 80\% & 97\% & 97\% & 79\%\\
\hline\hline
$\%$ by $\mathcal{M}(\circ)$ & 43\% & 51\% & 63\% & 81\% & 68\% & 37\%\\
$\%$ by $\Der(\circ)$ & 57\% & 75\% & 76\% & 92\% & 95\% & 79\% \\
$\%$ by $\Cent(\circ)$& 57\% & 57\% & 60\% & 65\% & 31\% & 5.4\%\\
\hline
\end{tabular}
\caption{Lower bounds on the proportions of groups of order $2^n$ having
non-semi-classical refinements by various methods.}
\label{tab:tools}
\end{table}

\section{Closing remarks}\label{sec:closing}

We have emphasized groups but the ideas generalize to nonassociatve rings
and also to loops by first using the work of \cite{Mostovoy} to obtain an initial 
associated graded ring.  From there the source for new gradings will likely 
benefit from the rings described in Section~\ref{sec:refine}. There is enormous symmetry within these rings and often a discovery
by one ring can be found by another, but we have examples within the small
group data above which show each is needed.  

We close with some speculations as to why the proportions seen in Figure~\ref{fig:data}
are so high.  First, if a group $G$ has a non-semi-classical
refinement then so does any central extension of $G$.  This explains some of the
proportions for $p$-groups of class at least 3.  
For $p$-groups of class 2 we 
are less certain.  We can take very large random polycyclic presentations of
$p$-groups of class $2$ and our methods regularly find nothing of interest.  
On the other hand in another model of randomness, such as taking quotients or 
subgroups of upper triangular matrices we find almost all such examples
admit non-semi-classical Lie product.  So the likelihood
of encountering examples with interesting structure depends greatly on the origin
of the groups of interest.  
Finally for non-nilpotent groups what seems a likely explanation
(and occurs in random examples) is that they have relatively large Fitting subgroups
and thus the proportions for nilpotent groups have a strong influence.  But this
is not the whole picture as for example 99\% of the $\approx$ 1 million groups of order $768=2^8\cdot 3$ have a non-semi-classical Lie product which exceeds
the proportion of nilpotent groups of lower order.  Whatever the reasons, 
the methods above are pliable and can be used to describe other future 
complex interactions between subgroups.  So it is not important that they remain focused on the rings
given in Section~\ref{sec:refine}.

\begin{bibdiv}
\begin{biblist}
\bib{millennium}{article}{
   author={Besche, Hans Ulrich},
   author={Eick, Bettina},
   author={O'Brien, E. A.},
   title={A millennium project: constructing small groups},
   journal={Internat. J. Algebra Comput.},
   volume={12},
   date={2002},
   number={5},
   pages={623--644},
   issn={0218-1967},
   review={\MR{1935567 (2003h:20042)}},
}

\bib{BNV:enum}{book}{
   author={Blackburn, S. R.},
   author={Neumann, P. M.},
   author={Venkataraman, G.},
   title={Enumeration of finite groups},
   series={Cambridge Tracts in Mathematics},
   volume={173},
   publisher={Cambridge University Press},
   place={Cambridge},
   date={2007},
   pages={xii+281},
   isbn={978-0-521-88217-0},
   review={\MR{2382539 (2009c:20041)}},
}

\bib{Magma}{article}{
   author={Bosma, Wieb},
   author={Cannon, John},
   author={Playoust, Catherine},
   title={The Magma algebra system. I. The user language},
   note={Computational algebra and number theory (London, 1993)},
   journal={J. Symbolic Comput.},
   volume={24},
   date={1997},
   number={3-4},
   pages={235--265},
   issn={0747-7171},
   review={\MR{1484478}},
}
\bib{deGraaf}{book}{
   author={de Graaf, Willem A.},
   title={Lie algebras: theory and algorithms},
   series={North-Holland Mathematical Library},
   volume={56},
   publisher={North-Holland Publishing Co., Amsterdam},
   date={2000},
   pages={xii+393},
   isbn={0-444-50116-9},
   review={\MR{1743970 (2001j:17011)}},
}
\bib{EH}{article}{
   author={Eick, Bettina},
   author={Horn, Max},
   title={The construction of finite solvable groups revisited},
   journal={J. Algebra},
   volume={408},
   date={2014},
   pages={166--182},
   issn={0021-8693},
   review={\MR{3197178}},
}
\bib{FW}{article}{
	author={First, U.},
	author={Wilson, J.B.},
	title={On the distributive property},
	note={(in preparation)},
}

\bib{Hatcher}{book}{
   author={Hatcher, Allen},
   title={Algebraic topology},
   publisher={Cambridge University Press, Cambridge},
   date={2002},
   pages={xii+544},
   isbn={0-521-79160-X},
   isbn={0-521-79540-0},
   review={\MR{1867354 (2002k:55001)}},
}
\bib{Higman:Lie}{article}{
	author={Higman, Graham},
	title={Lie ring methods in the theory of finite nilpotent groups},
	journal={Proc. Intern. Congr. Math. Edinburgh},
	year={1958},
	pages={307--312},
}
\bib{IR:tapas}{article}{
   author={Ivanyos, G{\'a}bor},
   author={R{\'o}nyai, Lajos},
   title={Computations in associative and Lie algebras},
   conference={
      title={Some tapas of computer algebra},
   },
   book={
      series={Algorithms Comput. Math.},
      volume={4},
      publisher={Springer, Berlin},
   },
   date={1999},
   pages={91--120},
   review={\MR{1679922 (2000j:16079)}},
}
\bib{Khukhro}{book}{
   author={Khukhro, E. I.},
   title={Nilpotent groups and their automorphisms},
   series={de Gruyter Expositions in Mathematics},
   volume={8},
   publisher={Walter de Gruyter \& Co.},
   place={Berlin},
   date={1993},
   pages={xiv+252},
   isbn={3-11-013672-4},
   review={\MR{1224233 (94g:20046)}},
}

\bib{Knuth}{article}{
   author={Knuth, Donald E.},
   title={Finite semifields and projective planes},
   journal={J. Algebra},
   volume={2},
   date={1965},
   pages={182--217},
   issn={0021-8693},
   review={\MR{0175942 (31 \#218)}},
}

\bib{Liebler}{article}{
   author={Liebler, Robert A.},
   title={On nonsingular tensors and related projective planes},
   journal={Geom. Dedicata},
   volume={11},
   date={1981},
   number={4},
   pages={455--464},
   issn={0046-5755},
   review={\MR{637920 (83d:51010)}},
}
\bib{Maglione:filter}{article}{
	author={Maglione, Joshua},
	title={Longer nilpotent series for classical unipotent subgroups (to appear)},
	journal={J. Group Theory},
	note={(arXiv:1410.8096)},
}
\bib{Mostovoy}{article}{
   author={Mostovoy, Jacob},
   title={The notion of lower central series for loops},
   conference={
      title={Non-associative algebra and its applications},
   },
   book={
      series={Lect. Notes Pure Appl. Math.},
      volume={246},
      publisher={Chapman \& Hall/CRC, Boca Raton, FL},
   },
   date={2006},
   pages={291--298},
   review={\MR{2203714 (2006k:20134)}},
}
\bib{Robinson}{book}{
   author={Robinson, Derek J. S.},
   title={A course in the theory of groups},
   series={Graduate Texts in Mathematics},
   volume={80},
   edition={2},
   publisher={Springer-Verlag, New York},
   date={1996},
   pages={xviii+499},
   isbn={0-387-94461-3},
   review={\MR{1357169 (96f:20001)}},
}
\bib{Wilson:alpha}{article}{
   author={Wilson, James B.},
   title={More characteristic subgroups, Lie rings, and isomorphism tests
   for $p$-groups},
   journal={J. Group Theory},
   volume={16},
   date={2013},
   number={6},
   pages={875--897},
   issn={1433-5883},
   review={\MR{3198722}},
}
\bib{Wilson:beta}{article}{
	author={Wilson, James B.},
	title={On automorphism groups in algebra (in preparation)},
}
\end{biblist}
\end{bibdiv}

\end{document}